\author[1,3]{Patrick Rubin-Delanchy}
\author[2,3]{Nicholas A Heard}
\author[4]{Daniel J Lawson}
\affil[1]{Department of Statistics, University of Oxford, UK}
\affil[2]{Department of Mathematics, Imperial College London, UK}
\affil[3]{Heilbronn Institute for Mathematical Research, University of Bristol, UK}
\affil[4]{School of Social and Community Medicine, University of Bristol, UK}
\def\Prob{\mathrm{P}}
\DeclareMathOperator*{\var}{var}
\DeclareMathOperator*{\sgn}{sgn}
\def\N{\mathbb{N}}
\def\R{\mathbb{R}}
\def\E{\mathrm{E}}
\def\R{\mathbb{R}}
\newtheorem{theorem}{Theorem}
\newtheorem{lemma}{Lemma}
\date{}
\title{Meta-analysis of mid-p-values: some new results based on the convex order}
\begin{document}
\maketitle
\begin{abstract}
The mid-p-value is a proposed improvement on the ordinary p-value for the case where the test statistic is partially or completely discrete. In this case, the ordinary p-value is conservative, meaning that its null distribution is larger than a uniform distribution on the unit interval, in the usual stochastic order. The mid-p-value is not conservative. However, its null distribution is dominated by the uniform distribution in a different stochastic order, called the convex order. The property leads us to discover some new finite-sample and asymptotic bounds on functions of mid-p-values, which can be used to combine results from different hypothesis tests conservatively, yet more powerfully, using mid-p-values rather than p-values. Our methodology is demonstrated on real data from a cyber-security application. 
\end{abstract}
\noindent%
{\it Keywords:} conservative test; convex order; hypothesis testing; meta-analysis; significance level; stochastic order 
\vfill

\section{Introduction}
\label{sec:intro}
Let $T$ be a real-valued test statistic, with probability measure $\Prob_0$ under the null hypothesis, denoted $H_0$. Let $X$ be a uniform random variable on the unit interval that is independent of $T$ under $\Prob_0$. $X$ is a randomisation device which is in practice usually generated by a computer. 

We consider the (one-sided) p-value,
\begin{equation}
P = \Prob_0(T^* \geq T), \label{eq:p-value}
\end{equation}
the mid-p-value  \citep{lancaster1952statistical},
\begin{equation}
Q = \frac{1}{2} \Prob_0(T^* \geq T)+\frac{1}{2} \Prob_0(T^* > T), \label{eq:q-value}
\end{equation}
and the randomised p-value,
\begin{equation}
R = X \Prob_0(T^* \geq T)+(1-X)\Prob_0(T^* > T), \label{eq:r-value}
\end{equation}
where $T^*$ is a hypothetical independent replicate of $T$ under $\Prob_0$. If $T$ is absolutely continuous under $H_0$, then the three quantities are equal and distributed uniformly on the unit interval. More generally, that is, if discrete components are possible, the three are different. Two main factors, one obvious and one more subtle, make this a very common occurrence. First, $T$ is discrete if it is a function of discrete data, e.g. a contingency table, categorical data or a presence/absence event. Second, discrete test statistics often occur as a result of conditioning, as in the permutation test or Kendall's tau test \citep{sheskin2003handbook}. Partially discrete tests occur, for example, as a result of censoring.

When $P$, $Q$ and $R$ are not equal, it is a question which to choose. The ordinary p-value is often preferred in relatively strict hypothesis testing conditions, e.g. in clinical trials, where the probability of rejecting the null hypothesis must not exceed the nominal level (often 5\%). The randomised p-value has some theoretical advantages, e.g. the nominal level of the test is met exactly. However, to quote one of its earliest proponents, ``most people will find repugnant the idea of adding yet
another random element to a result which is already subject to the errors of random sampling'' \citep{stevens1950fiducial}. Randomised p-values also fail Birnbaum's admissibility criterion \citep{birnbaum1954combining}. Note that we can also work with an unrealised version of the randomised p-value, known as the \emph{fuzzy} or \emph{abstract} p-value \citep{geyer2005fuzzy}, and either stop there --- leaving interpretation to the decision-maker --- or propagate uncertainty through to any post-hoc analysis, e.g. multiple-testing \citep{kulinskaya2009fuzzy,habiger2015multiple}.

Although it can allow breaches of the nominal level, the mid-p-value is often deemed to better represent the evidence against the null hypothesis than the ordinary or randomised p-values. Justifications are not just heuristic as, for example, the mid-p-value can arise as a Rao-Blackwellisation of the randomised p-value corresponding to the uniformly most powerful test \citep{wells2010optimality}, as an optimal estimate of the $H_0$ versus $H_1$ truth indicator under squared loss \citep{hwang2001optimality}, or from asymptotic Bayesian arguments \citep{routledge1994practicing}. Performance has also been demonstrated in applications, e.g. in the context of healthcare monitoring \citep{spiegelhalter2012statistical} (a paper read before the Royal Statistical Society), genetics \citep{graffelman2013mid}, a wealth of examples involving contingency tables \citep{lydersen2009recommended}, and more. Our own interest stems from cyber-security applications, and a motivating example is given in Section~\ref{sec:intrusion_detection}. Most arguments for using the mid-p-value in hypothesis testing scenarios also work for confidence intervals. Here, using the mid-p-value over the p-value can result in a smaller interval, with a closer-to-nominal coverage probability \citep{berry1995mid,fagerland2015recommended}. 

In this article, we are able to make further mathematical progress on the mid-p-value by using a stochastic order known as the \emph{convex order}. The problem we focus on is meta-analysis, that is, combining evidence from different hypothesis tests into one, global measure of significance. In some of the scenarios analysed, the use of the ordinary p-value leads to sub-optimal, and even spurious results. New bounds for some commonly-used methods for combining ordinary p-values are derived for mid-p-values. This allows large gains in power over using ordinary p-values, while, unlike any previous study based on mid-p-values, the false positive rate is still controlled exactly (albeit conservatively).

The remainder of this article is structured as follows. In Section~\ref{sec:main_results}, we summarise our main results. Section~\ref{sec:intrusion_detection} gives a cyber-security application where, using mid-p-values, we are able to detect a cyber-attack that would likely fall under the radar if only ordinary p-values were used. Section~\ref{sec:combining} elaborates on the results of Section~\ref{sec:intrusion_detection}, with improved (although more complicated) bounds, simulations and discussion. Section~\ref{sec:conclusion} concludes. All proofs are relegated to the Appendix.

\section{Main results}\label{sec:main_results}
This section summarises the main ideas and findings of the paper. Let $U$ denote a uniform random variable on the unit interval, with expectation operator $\E$, and let $\E_0$ denote expectation with respect to $\Prob_0$. Under the null hypothesis, it is well known, see e.g. \citet{casella2002statistical}, that $P$ dominates $U$ in the \emph{usual stochastic order}, denoted $P \geq_{st} U$. One way to write this is
\begin{equation}
\E_0\{f(P)\} \geq \E\{f(U)\},\label{eq:conservative}
\end{equation}
for any non-decreasing function $f$, whenever the expectations exist \citep{shaked07}. It is also well known, and in fact true by design, that $R$ is uniformly distributed under the null hypothesis, denoted $R =_{st} U$. On the other hand, it is not widely known that, under the null hypothesis, $Q$ is dominated by $U$ in the \emph{convex order}, denoted $Q \leq_{cx} U$. One way to write this is  \citep[Chapter 3]{shaked07}
\begin{equation}
\E_{0}\{h(Q)\} \leq \E\{h(U)\}, \label{eq:convex}
\end{equation}
for any \emph{convex} function $h$, whenever the expectations exist. We have used the qualifier `widely', because an effective equivalent of equation \eqref{eq:convex} can be found in \citet{hwang2001optimality}. However, even there, equation \eqref{eq:convex} is not recognised as a major stochastic order, meaning that some of its importance is missed.

In particular, we now present three concrete, new results, made possible by the literature on the convex order. Each provides a method for combining mid-p-values conservatively, the first two in finite samples and the last asymptotically. Details and improved (but more complicated) bounds are given in Section~\ref{sec:combining}. In what follows, $Q_1, \ldots, Q_n$ denote independent (but not necessarily identically distributed) mid-p-values, with an implied joint probability measure $\tilde \Prob_0$ under the null hypothesis. 

Let $\bar Q_n = n^{-1} \sum_{i=1}^n Q_i$ denote the average mid-p-value. For $t \geq 0$,
\begin{equation}
\tilde \Prob_0\left(1/2 - \bar Q_n\geq t\right) \leq \exp(-6 n t^2). \label{eq:sum_bound}
\end{equation}
Note that, first, no knowledge of the individual mid-p-value distributions is required. Second, Hoeffding's inequality \citep{hoeffding1963probability}, which would be available more generally, gives the larger bound $\exp(-2nt^2)$ (the cubic root).

Let $F_n = -2 \sum_{i=1}^n \log(Q_i)$, known as Fisher's statistic \citep{fisher1934statistical} and the most popular method for combining p-values (in the continuous case, it is well-known that $F_n$ has a chi-square distribution with $2n$ degrees of freedom under $H_0$). For $t \geq 2n$,
\begin{equation}
\tilde \Prob_0(F_n \geq t) \leq \exp\{n-t/2-n\log(2 n/t)\}. \label{eq:fisher_bound}
\end{equation}
Finally, assume additionally that $Q_1, \ldots, Q_n$ are identically distributed. Then applying Fisher's method as usual, i.e. treating the mid-p-values as if they were ordinary p-values and using the chi-square tail, is asymptotically conservative as $n \rightarrow \infty$. 

\section{Example: network intrusion detection}\label{sec:intrusion_detection}
The perceived importance of cyber-security research has risen dramatically in recent years, particularly after several well-publicised events in 2016 and 2017. In this area, anomaly detection over very high volumes and rates of network data is a key statistical problem \citep{adams2016dynamic}. In our experience of the field, discrete data, whether they be presence/absence events, counts or categorical data, are absolutely the norm rather than the exception. We will demonstrate the value of our paper's contributions in a network intrusion detection problem. 

Figure \ref{fig:auth} shows publically available authentication data covering 58 days on the Los Alamos National Laboratory computer network \citep{kent16}. Nodes in the graph are computers, and an edge indicates that there was at least one connection from one computer to the other, resulting in a graph with $m \approx 18,000$ nodes and $~\sim400,000$ directed edges. An exciting opportunity offered by this data resource is that it contains an actual cyber-attack: or, to be precise, records of penetration testing activity conducted by a `red-team'. One of the four computers used for the attack (the highest degree of the four, ID ``C17693'', with 296 out of 534 edges labelled as nefarious) is highlighted in red on the left, with its connections highlighted in pink on the right. 

\begin{figure}[t]
  \centering
  \includegraphics[width=13cm]{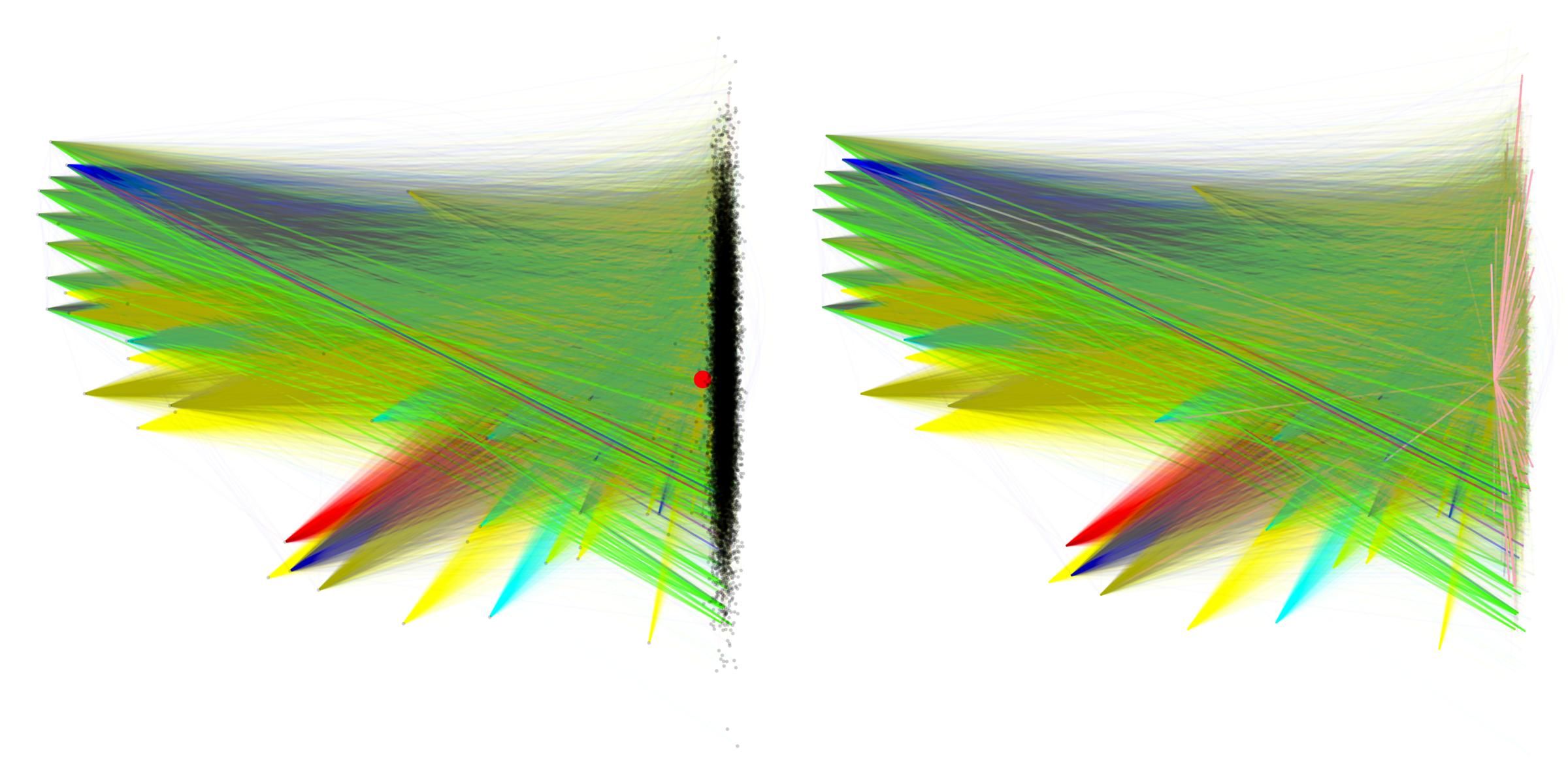}
  \caption{Authentication data: full network of connections comprising $\sim18,000$ nodes and $~\sim400,000$ directed edges. Edges are coloured by authentication type. On the left, nodes are shown as black points, with node ID ``C17693'' highlighted in red (and larger). On the right, the points are hidden to better see the connections made by node ID ``C17693'', which are now highlighted in pink.}
  \label{fig:auth}
\end{figure}

Earlier work on network intrusion has suggested that the occurrence of \emph{new edges} on the network can be indicative of nefarious behaviour \citep{neil2013towards,neil2015using}. Looking at the outward connections from a given computer, in particular, those which involve a computer otherwise receiving relatively few new connections present special interest. Because the first day of data has no red-team activity, we use this day to learn a rate $\lambda_j, j = 1, \ldots, m$ at which each computer receives new connections, treating the times as right-censored independent and identically distributed exponential random variables. For every computer on the network, the set of outward new connections made over the remainder of the observation period $[1,58]$ is scored according to this model. The test-statistic 
\[T_{ij} = \begin{cases} 57 & \text{if no connection occurs from $i$ to $j$,} \\ \tau -1 & \text{if a new connection from $i$ to $j$ occurs at time $\tau$}, \end{cases}\]
is considered for every directed pair $(i,j)$ not occurring as an edge on the first day, so that each node $i$ has associated with it a collection of test statistics $T_{i\cdot}$, which are partially discrete, with a point mass at 57.

For regularisation purposes, the rates $\lambda_j, j = 1, \ldots, m$ are assumed \emph{a priori} to follow a Gamma distribution matching the mean and variance of the empirical rates computed for each $j = 1, \ldots, m$ over the full period of 58 days. The use of this prior implies that before censoring $T_{ij}$ has a Gamma-Exponential (also called Lomax) predictive distribution, which is used to compute the collection of ordinary, mid, and randomised p-values $P_{i\cdot}, Q_{i\cdot}, R_{i\cdot}$ corresponding to the outward connections of each node $i = 1, \ldots, m$.

As we are interested in the \emph{ranking} of computer ID ``C17693'' among the other $\sim18,000$ computers, as well as its p-value, it makes sense to extend the ranges of the bounds \eqref{eq:sum_bound} and \eqref{eq:fisher_bound} as follows:
\begin{align}
\tilde \Prob_0\left(1/2 - \bar Q_n\geq t\right) &\leq \exp\{-6 \sgn(t) n t^2\}, & t \in \R,\label{eq:sum_bound2}\\
\tilde \Prob_0(F_n \geq t) &\leq \exp[\sgn(t-2n)\{n-t/2-n\log(2 n/t)\}], & t>0, \label{eq:fisher_bound2}
\end{align}
which preserves the shape and monotonicity of the curves, and remains valid because larger values than unity are returned outside the old ranges. Our options are:
\begin{enumerate}
\item to compute the average ordinary, mid, and randomised p-values, and obtain a global significance level using bound \eqref{eq:sum_bound2}. Computer ID ``C17693'' then ranks as 8th (p-value $\approx 1$), 8th (p-value $\approx 10^{-7}$) and 9th (p-value $\approx 10^{-7}$) most anomalous of the $\sim 18,000$ computers respectively. 
\item to compute Fisher's statistic for the ordinary, mid, and randomised p-values, and obtain a global significance level using bound \eqref{eq:fisher_bound2} for the second case, and the chi-square tail otherwise. Computer ID ``C17693'' now ranks joint 8118th (p-value $\approx 1$), 2nd (p-value $\approx 1$) and 9th (p-value $\approx 10^{-43}$) respectively.
\item to assume an asymptotic regime and use the chi-square tail for the Fisher-with-mid-p-values statistic instead. Computer ID ``C17693'' then ranks 8th (p-value $\approx 1$).
\end{enumerate}

As rankings go, therefore, the mid-p-value is never beaten, with computer ID ``C17693'' coming in the top ten every time and coming second once. The most obvious approach of using Fisher's method with ordinary p-values fails completely. As for the other three red-team computers: using the best performing method, i.e. Fisher's statistic with mid-p-values and bound \eqref{eq:fisher_bound2}, where Computer ID ``C17693'' comes second, their ranks are 384th (ID ``C18025''), 550th (ID ``C19932'') and 1079th (ID ``C22409'').

\section{Meta-analysis of mid-p-values: further details}\label{sec:combining}
This section elaborates on the results of Section~\ref{sec:main_results}. We say that a random variable (and its measure and distribution function) is \emph{sub-uniform} if it is less variable than a uniform random variable, $U$, in the convex order. 

To see why the mid-p-value is sub-uniform, notice that $Q = \E_0(R \mid T)$. By Jensen's inequality, for any convex function $h$, 
\begin{equation}
\E_0\{h(Q)\} = \E_0[h\{\E_0(R\mid T)\}] \leq \E_0[\E_0\{h(R) \mid T\}] = \E_0\{h(R)\} = \E\{h(U)\}, \label{eq:proof}
\end{equation}
whenever the expectations exist, since $R =_{st} U$. Remember that we do not claim this result is new, see e.g. \citet{hwang2001optimality}, but rather the idea to exploit the literature on the convex order.

To formalise the meta-analysis framework, let $T_1, \ldots, T_n$ be a sequence of independent test statistics. We consider a joint null hypothesis, $\tilde H_0$, under which $T_1, \ldots, T_n$ have probability measure $\Prob_0^{(1)}, \ldots, \Prob_0^{(n)}$ respectively. The p-values, $P_i$, mid-p-values, $Q_i$, and randomised p-values, $R_i$, are obtained by replacing $\Prob_0$ with $\Prob_0^{(i)}$ in \eqref{eq:p-value}, \eqref{eq:q-value} and \eqref{eq:r-value} respectively. In the case of the randomised p-value, an independent uniform variable, $X_i$, is generated each time. $\tilde \Prob_0$ denotes the implied joint probability measure of the statistics under $\tilde H_0$. The focus of this section is on testing the joint null hypothesis $\tilde H_0$.
Probability bounds that follow often have the form $\tilde \Prob_0\{ f(Q_1, \ldots, Q_n) \geq t\} \leq b_n(t)$. If the observed mid-p-values are $q_1, \ldots, q_n$ and level of the test is $\alpha$ (e.g. 5\%), then a procedure that rejects when  $b_n\{f(q_1, \ldots, q_n)\}\leq \alpha$ is conservative: the probability of rejecting $\tilde H_0$ if $\tilde H_0$ is true does not exceed $\alpha$.

\subsection{Sums of mid-p-values}\label{sec:sums}
An early advocate of mid-p-values, \citet{barnard1989alleged, barnard1990must} proposed to combine test results from different contingency tables by taking the sum of standardised mid-p-values. His exposition relies on some approximations. Our results make exact inference possible.

We begin with a bound on the sum of independent mid-p-values. This bound bears an interesting resemblance to Hoeffding's inequality \citep{hoeffding1963probability}. It will later be extended to be relevant to Barnard's analysis.

\begin{theorem}\label{thm:sums}
Let $X_1, \ldots, X_n$ denote $n$ independent sub-uniform random variables with mean $\bar X_n=n^{-1} \sum_{i=1}^n X_i$. Then, for $0 \leq t \leq 1/2$,
\begin{align}
\Prob\left(1/2 - \bar X_n\geq t\right) &\leq \min_{h\geq 0}\left\{2 e^{-h t} \sinh(h/2)/h\right\}^n,\label{eq:bound1}\\
& \leq \exp(-12 n t^2) \left\{\sinh(6 t)/(6t)\right\}^n, \label{eq:bound2}\\
& \leq \exp(-6 n t^2). \label{eq:bound}
\end{align}
\end{theorem}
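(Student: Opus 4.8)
The plan is to use the standard Chernoff-bound machinery, exploiting the fact that every $X_i$ is sub-uniform. First I would write, for any $h \geq 0$,
\[
\Prob\left(1/2 - \bar X_n \geq t\right) = \Prob\left(\sum_{i=1}^n (1/2 - X_i) \geq nt\right) \leq e^{-hnt}\, \E\!\left[\exp\!\left(h \sum_{i=1}^n (1/2 - X_i)\right)\right],
\]
by Markov's inequality applied to the exponential. Since the $X_i$ are independent, the expectation factorises into $\prod_{i=1}^n \E\{e^{h(1/2 - X_i)}\}$, so the task reduces to bounding the moment generating function of a single centred sub-uniform variable. The crucial point is that $x \mapsto e^{h(1/2 - x)}$ is convex in $x$ for each fixed $h \geq 0$, so sub-uniformity (the defining inequality \eqref{eq:convex}) gives $\E\{e^{h(1/2 - X_i)}\} \leq \E\{e^{h(1/2 - U)}\}$, where $U$ is uniform on $[0,1]$. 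This replaces the unknown distribution of each $X_i$ by the uniform benchmark, which is exactly why no knowledge of the individual mid-p-value distributions is needed.

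Next I would compute the uniform mgf explicitly: $\E\{e^{h(1/2 - U)}\} = e^{h/2} \int_0^1 e^{-hu}\,\rd u = e^{h/2}(1 - e^{-h})/h = 2\sinh(h/2)/h$. Substituting back yields $\Prob(1/2 - \bar X_n \geq t) \leq \{2 e^{-ht}\sinh(h/2)/h\}^n$ for every $h \geq 0$, and optimising over $h$ gives \eqref{eq:bound1}.

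For the two successive relaxations, rather than solving the optimisation exactly I would simply evaluate the bound at a convenient choice of $h$. Taking $h = 12t$ produces $2 e^{-12 t^2}\sinh(6t)/(12t) = e^{-12t^2}\sinh(6t)/(6t)$, giving \eqref{eq:bound2}. Finally, the elementary inequality $\sinh(x) \leq x\, e^{x^2/6}$ (verifiable by comparing Taylor coefficients, since $\sinh(x)=\sum_k x^{2k+1}/(2k+1)!$ while $x\,e^{x^2/6}=\sum_k x^{2k+1}/(6^k k!)$ and $6^k k! \leq (2k+1)!$) applied with $x = 6t$ gives $\sinh(6t)/(6t) \leq e^{6t^2}$, so \eqref{eq:bound2} is at most $e^{-12nt^2} e^{6nt^2} = e^{-6nt^2}$, which is \eqref{eq:bound}.

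The main obstacle is the choice $h = 12t$: it is not the exact minimiser, so I would want to confirm it is a legitimate upper bound (any fixed $h \geq 0$ suffices, since \eqref{eq:bound1} is a minimum) and that it leads cleanly to the clean Gaussian-tail form. The only genuinely delicate step is verifying the coefficientwise inequality $6^k k! \leq (2k+1)!$ underpinning $\sinh(x) \leq x\,e^{x^2/6}$; this holds for all $k \geq 0$ by an easy induction, so the relaxation from \eqref{eq:bound2} to \eqref{eq:bound} is valid.
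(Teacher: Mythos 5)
Your proposal is correct and follows essentially the same route as the paper: a Chernoff--Markov argument in which the convex order replaces each $\E\{e^{-hX_i}\}$ by the uniform moment generating function $2e^{-h/2}\sinh(h/2)/h$, followed by the non-optimal but convenient choice $h=12t$. The only (cosmetic) differences are that you centre at $1/2$ directly rather than invoking the symmetry that $1-X$ is sub-uniform, and you close the final step with the coefficientwise inequality $\sinh(x)\leq x\,e^{x^2/6}$, which is a clean and fully correct way to pass from \eqref{eq:bound2} to \eqref{eq:bound}.
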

A sub-uniform random variable has expectation $1/2$ and is bounded between $0$ and $1$. Hoeffding's inequality would therefore give us $\Prob\left(1/2 - \bar X_n\geq t\right) \leq \exp(-2 n t^2)$, the cubic root. Our improvement is substantial, for example, suppose we observe an average of 0.4 from $n=100$ mid-p-values. This is very significant: $\tilde \Prob_0\left(1/2 - \bar Q_n\geq 0.1\right) \leq 0.0025$ using \eqref{eq:bound}. However, we would only find $\tilde \Prob_0\left(1/2 - \bar Q_n\geq 0.1\right) \leq 0.14$ using Hoeffding's inequality.

Instead of summing the mid-p-values directly, \citet{barnard1990must} actually considers sums of the standardised statistics
\[D_i = (1/2 - Q_i)/\sigma_i,\]
where $\sigma_i$ is the standard deviation of $Q_i$ under $\tilde H_0$. The upper tail probability of the sum is then estimated by Gaussian approximation. In the purely discrete case, Barnard shows that $\sigma_i = \{(1-s_i)/12\}^{1/2}$ where
\[s_i = \sum_{t \in S_i} \left\{\Prob^{(i)}_0(T_i = t)\right\}^{3},\]
and $S_i$ is the (countable) support of $Q_i$. Instead of appealing to the Gaussian approximation, the convex order allows us to find an exact bound.

\begin{lemma}
\label{thm:bound2}
Let $X_1, \ldots, X_n$ denote $n$ independent sub-uniform random variables with standard deviations $\sigma_1, \ldots, \sigma_n$ respectively, and let 
\[\bar Y_n = \frac{1}{n} \sum_{i=1}^n (1/2 - X_i)/\sigma_i.\]
Then, for $t \geq 0$,
\begin{align}
\Prob(\bar Y_n \geq t) &\leq \min_{h\geq0} \left(\prod_{i=1}^n \exp[-h\{t+1/(2 \sigma_i)\}] \left\{ \frac{e^{h/\sigma_i} - 1}{h/\sigma_i} + h^2 \left(\frac{1}{2}- \frac{1}{24\sigma_i^2}\right)\right\}\right),  \label{eq:bestbound}\\
&\leq \exp\{ - 6 n (\bar \sigma t)^2\} \label{eq:secondbestbound},
\end{align}
where $\bar \sigma  = (\prod \sigma_i)^{1/n}$ is the geometric mean of the standard deviations.
\end{lemma}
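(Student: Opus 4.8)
The plan is to establish \eqref{eq:bestbound} by a Chernoff--Markov argument paralleling the proof of Theorem~\ref{thm:sums}, and then to obtain \eqref{eq:secondbestbound} by a specific choice of the free parameter. Since $\bar Y_n \geq t$ is the event $\sum_{i=1}^n (1/2 - X_i)/\sigma_i \geq nt$, applying Markov's inequality to $\exp\{h\sum_i (1/2-X_i)/\sigma_i\}$ for $h \geq 0$ and using independence reduces the problem to bounding each factor $\E\{\exp[h(1/2-X_i)/\sigma_i]\}$. As $1-X_i$ is sub-uniform with the same standard deviation $\sigma_i$, I would substitute $1/2-X_i = X_i'-1/2$ with $X_i' = 1-X_i$, so the exponent becomes a non-negative multiple of $X_i'$ and the factor $\exp\{-h/(2\sigma_i)\}$ separates; distributing the $\exp(-hnt)$ from Markov across the product then yields the outer term $\exp[-h\{t+1/(2\sigma_i)\}]$ of \eqref{eq:bestbound}.

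The ingredient absent from Theorem~\ref{thm:sums} is the use of the \emph{known} variance rather than mere sub-uniformity. Writing $\lambda = h/\sigma_i$, I would introduce $g(x) = \exp(\lambda x) - \tfrac12\lambda^2(x-1/2)^2$, which is convex on $[0,1]$ for $\lambda \geq 0$ because $g''(x) = \lambda^2\{\exp(\lambda x)-1\} \geq 0$ there. The convex order $X_i' \leq_{cx} U$ gives $\E\{g(X_i')\} \leq \E\{g(U)\}$, and since both variables have mean $1/2$ the quadratic parts evaluate to $-\tfrac12\lambda^2\sigma_i^2$ and $-\tfrac{1}{24}\lambda^2$; rearranging leaves $\E\{\exp(\lambda X_i')\} \leq (e^\lambda-1)/\lambda + \tfrac12\lambda^2(\sigma_i^2 - 1/12)$, which is exactly the bracketed factor of \eqref{eq:bestbound} after substituting $\lambda = h/\sigma_i$. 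The correction is non-positive precisely because sub-uniformity forces $\sigma_i^2 \leq 1/12$, so it sharpens the crude estimate $\E\{\exp(\lambda X_i')\} \leq (e^\lambda-1)/\lambda$; taking the minimum over $h \geq 0$ finishes \eqref{eq:bestbound}.

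For the explicit bound \eqref{eq:secondbestbound} I would set $h = 12\bar\sigma^2 t$. This is dictated by the equal-variance case: when all $\sigma_i = \sigma$ the event is $1/2 - \bar X_n \geq \sigma t$, which Theorem~\ref{thm:sums} bounds by $\exp(-6n\sigma^2 t^2)$, and $h = 12\bar\sigma^2 t$ is the value realising it. With this choice the Markov factor equals $\exp(-12n\bar\sigma^2 t^2)$, so it remains to show the product of bracketed factors is at most $\exp(6n\bar\sigma^2 t^2)$; writing $a_i = h/\sigma_i$ and $B(a_i,\sigma_i) = \exp\{-h/(2\sigma_i)\}\times(\text{bracket})$, this is the claim $\sum_{i=1}^n \log B(a_i,\sigma_i) \leq 6n\bar\sigma^2 t^2 = nht/2$, where $B(a,\sigma) = 2\sinh(a/2)/a + \exp(-a/2)a^2(\sigma^2/2 - 1/24)$ after using $\exp(-a/2)(e^a-1)/a = 2\sinh(a/2)/a$.

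The hard part will be this final inequality. Discarding the correction term and applying the classical centred-uniform bound $2\sinh(a/2)/a \leq \exp(a^2/24)$ gives only $\sum_i \log B \leq \tfrac{1}{24}h^2\sum_i 1/\sigma_i^2$, which upon optimising yields the \emph{harmonic} mean of the $\sigma_i^2$ in the exponent; as this lies below the geometric mean $\bar\sigma^2$, it is strictly weaker than \eqref{eq:secondbestbound}. The geometric mean must therefore be recovered from the coupling $a_i^2\sigma_i^2 = h^2$ together with the slack $h = 12\bar\sigma^2 t \leq t$ granted by $\bar\sigma^2 \leq 1/12$: to leading order $\log B(a_i,\sigma_i) \approx h^2/2$, so $\sum_i \log B \approx nh^2/2 \leq nht/2$, yet the per-variable inequality $\log B(a,\sigma) \leq \tfrac12 a^2\sigma^2$ cannot be used directly, since it already fails at $\sigma = 0$, where $B(a,0) > 1$. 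I would instead bound the excess of each $\log B(a_i,\sigma_i)$ over $h^2/2$ and absorb it into the factor-$1/(12\bar\sigma^2)$ gap between $nh^2/2$ and the target, most plausibly through a convexity or arithmetic--geometric-mean argument in the variables $\log\sigma_i$; checking that this absorption holds uniformly for all $t \geq 0$, in particular in the large-$t$ regime where the $\sinh$ term dominates and the correction vanishes, is the crux.
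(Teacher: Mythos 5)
Your derivation of \eqref{eq:bestbound} is correct and follows the same Chernoff--Markov route as the paper. The only difference is cosmetic: the paper bounds $\E\{\exp(hX_i/\sigma_i)\}$ by comparing the Taylor series of the exponential term by term (using $\E(X_i^k)\leq\E(U^k)$ for $k\geq3$ from the convex order, while keeping the exact second moment), whereas you obtain the identical factor in one step by applying the convex order to the auxiliary convex function $g(x)=e^{\lambda x}-\tfrac12\lambda^2(x-1/2)^2$. That is a slightly cleaner packaging of the same idea, and your reduction via $X_i'=1-X_i$ matches the paper's appeal to symmetry.

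The difficulty you flag for \eqref{eq:secondbestbound} is not a failure of your technique: the inequality \eqref{eq:secondbestbound} is false in general, and your harmonic-mean computation is the correct endpoint of the argument. The paper's proof passes from $\prod_i\sinh\{h/(2\sigma_i)\}$ to $[\sinh\{h/(2\bar\sigma)\}]^n$ by invoking geometric convexity of $\sinh$, but geometric convexity gives exactly the reverse inequality ($\sinh$ evaluated at a geometric mean is \emph{at most} the geometric mean of the $\sinh$ values), so that step replaces an upper bound by something smaller. A concrete counterexample: take $n=2$, let $X_2$ be uniform, and let $X_1$ equal $a$ with probability $2a$ and $1/2+a$ otherwise, with $a=10^{-4}$; this $X_1$ is sub-uniform (its integrated distribution function is $2a(t-a)\leq t^2/2$ on $[a,1/2+a)$ and $t-1/2\leq t^2/2$ beyond) with $\sigma_1^2=a(1-2a)/2\approx 5\times10^{-5}$. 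Then $\bar\sigma^2\approx 2.04\times10^{-3}$, and for $t=35$ the event $\bar Y_2\geq t$ occurs whenever $X_1=a$ and $X_2\leq 0.70$, so $\Prob(\bar Y_2\geq 35)\approx 1.4\times10^{-4}$, whereas $\exp\{-6\cdot 2\cdot(\bar\sigma t)^2\}=\exp(-30)\approx 9\times10^{-14}$. What your argument does prove, and what should stand in place of \eqref{eq:secondbestbound}, is $\Prob(\bar Y_n\geq t)\leq\exp(-6nt^2\sigma_H^2)$ with $\sigma_H^2=n/\sum_i\sigma_i^{-2}$ the harmonic mean of the variances; since $\sigma_H^2\leq\bar\sigma^2$ with equality only when all variances coincide, the geometric-mean exponent cannot be recovered by a better choice of $h$ or by any AM--GM absorption, so you should abandon that search. (Incidentally, your choice $h=12\bar\sigma^2t$ is the one actually consistent with ``proceeding as in Theorem~\ref{thm:sums}''; the paper's stated $h=12\bar\sigma t$ does not reduce to that case even when all $\sigma_i$ are equal.)
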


In practice, the bound \eqref{eq:bestbound}, which is an important improvement over \eqref{eq:secondbestbound}, is found numerically by minimising over $h$. Of course, even if the optimum cannot be determined exactly the obtained bound still holds, because the tail area is simply over-estimated.

To illustrate how the bound \eqref{eq:bestbound} performs in practice, we now re-visit Barnard's example \citep[p.606]{barnard1990must}. The first experiment he considers yields $Q_1 = 1/7, s_1 = 9002/42^3, D_1 = 1.32$. The second yields $Q_2=1/9, s_2=141/729, D_2 = 1.5$. Since the sum divided by $\sqrt{2}$ is almost two, i.e. two standard deviations away, he finds ``serious evidence'' against the null hypothesis. Lemma \ref{thm:bound2} finds $\tilde \Prob_0(D_1 + D_2 \geq 1.32+1.5) \leq 0.12$, providing some evidence in favour of the alternative, but not significant at, say, the $5\%$ level. On the other hand, evidence would start to become compelling if we were to observe the second result again,  $Q_3=1/9, s_3=141/729, D_3 = 1.5$; Lemma \ref{thm:bound2} then finds $\tilde \Prob_0(D_1 + D_2 + D_3 \geq 1.32+1.5+1.5) \leq 0.036$. 

\subsection{Products of mid-p-values (Fisher's method)}
Fisher's method \citep{fisher1934statistical} is the most popular way of combining p-values. As is well-known, under $\tilde H_0$, the statistic $-2\sum_{i=1}^n \log(P_i)$ has a chi-square distribution with $2n$ degrees of freedom if $P_i$ are absolutely continuous. Therefore, the p-value of the combined test is $P^\dagger = S_{2n}\{-2\sum_{i=1}^n \log(P_i)\}$, where $S_k$ is the survival function of a chi-square distribution with $k$ degrees of freedom. This results in an exact procedure when $P_i$ are absolutely continuous, and a conservative one otherwise, i.e. $P^\dagger \geq_{st} U$ under $\tilde H_0$. 

Our next result allows us to use the mid-p-values $Q_1, \ldots, Q_n$ in place of $P_1, \ldots, P_n$ while retaining a conservative procedure. We were able to derive three probability bounds. None beats the other two uniformly for all $n$ and all significance levels (see Figure~\ref{fig:fisher_bound}), but the last is often the winner, hence the simpler statement of Section~\ref{sec:main_results}.

\begin{theorem}\label{thm:fisher_finite}
Let $X_1, \ldots, X_n$ be a sequence of independent sub-uniform random variables. Then for $x \geq 2 n$, 
\begin{multline*}
\Prob\left(-2 \sum_{i=1}^n \log(X_i) \geq x\right) \leq \min\Big[S_{2m}(x - 2 n \log 2),\\
n\left/\left[n+\{(x-2n)/2\}^2\right]\right., \exp\{n-x/2-n\log(2 n/x)\}\Big] = u_n(x).
\end{multline*}
\end{theorem}

The first uses $\Prob(X_i \leq \alpha) \leq 2 \alpha$ for $\alpha \geq 0$, which would be obvious if $X_i$ was a mid-p-value, but is actually true for any sub-uniform random variable \citep{meng94}. The second uses bounds on the mean and variance of $-\log(X_i)$ (given in Lemma \ref{lem:mean_and_var}, in the Appendix) and then applies the Chebyshev-Cantelli inequality. The third is based on a bound on the moment generating function of $-\log(X_i)$. Derivation details are in the Appendix. 

For a given $n$ and $\alpha \in (0,1]$, let $t_{\alpha,n}$ denote the critical value of Fisher's statistic, i.e., $t_{\alpha,n}$ satisfies $S_{2n}(t_{\alpha,n}) = \alpha$. Figure \ref{fig:fisher_bound} presents the behaviour of the different bounds for different $n$ ($20$ on the left and 1 billion on the right) and $\alpha$. The curves show the bound given by each formula at different $\alpha$ (which can be interpreted as `canonical levels'), i.e. inputting $x = t_{\alpha,n}$ in Theorem~\ref{thm:fisher_finite}, as $\alpha$ ranges from $10^{-5}$ to $0.1$. For low $\alpha$, the bound based on the moment generating function, marked MGF, is by far the best. 

\begin{figure}[t]
  \centering
  \includegraphics[width=13cm]{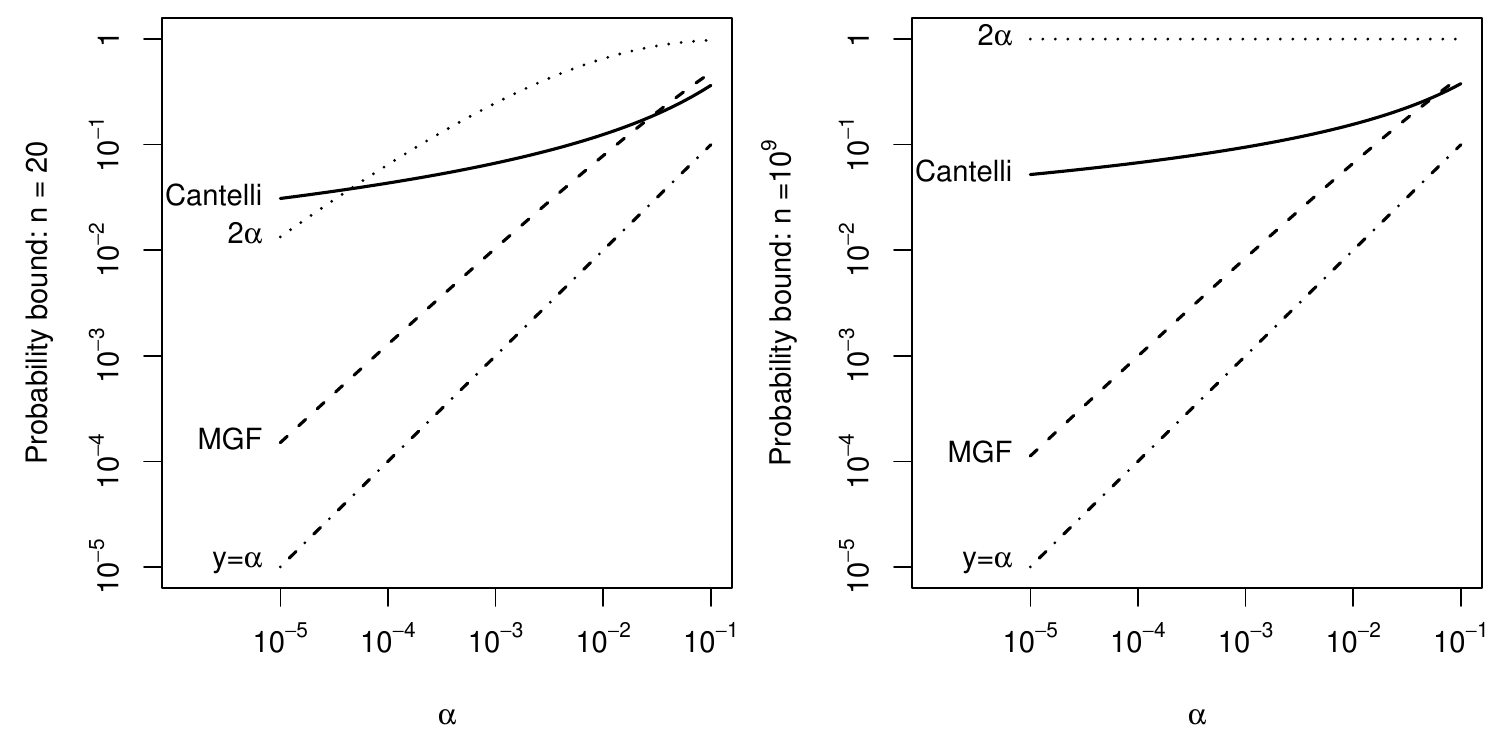}
  \caption{Comparison of the probability bounds given by Theorem~\ref{thm:fisher_finite} for Fisher's method using mid-p-values. Theorem~\ref{thm:fisher_finite} gives explicit formulae for $2 \alpha$, Cantelli and MGF, in that order. Both axes are on the logarithmic scale.}\label{fig:fisher_bound}
\end{figure}

Let $Q^\dagger = u_n\{-2 \sum_{i=1}^n \log(Q_i)\}$. Then $Q^\dagger$ is again conservative, i.e., $Q^\dagger \geq_{st} U$ under $\tilde H_0$. Both $P^\dagger$ and $Q^\dagger$ are valid p-values. Clearly, if the underlying p-values are continuous then the standard $P^\dagger$ is superior (in fact, deterministically smaller). However, $Q^\dagger$ seems to be substantially more powerful in a wide range of discrete cases. This is demonstrated by simulation in Section~\ref{sec:simulations}. 

Finally, we find this interesting asymptotic result.
\begin{theorem}[Fisher's method is asymptotically conservative]\label{thm:fisher_asymp}
Let $X_1, \ldots, X_n$ denote $n$ independent and identically distributed sub-uniform random variables. For any $\alpha \in (0,1]$, there exists  $N \in \N$ such that 
\[\Prob\left(- 2 \sum_{i=1}^n \log(X_i) \geq t_{\alpha, n}\right) \leq \alpha,\]
for any $n \geq N$.
\end{theorem}
Hence, we can dispense with any correction entirely if $n$ is large enough and the $Q_i$ are identically distributed. A formal proof is given in the Appendix. Since $\E\{-\log(X_i)\} \leq \E\{-\log(U)\}$, from the definition of the convex order, a direct application of the law of large numbers gets us most of way, except for the possibility $\E\{-\log(X_i)\} = \E\{-\log(U)\}$. In fact, this exception is no problem because, perhaps surprisingly, it implies that the $X_i$ are uniform, using \citet[Theorem 3.A.43]{shaked07}.

\subsection{Simulations}\label{sec:simulations}
To illustrate the potential improvement of employing Fisher's method with mid-p-values, using the bound \eqref{eq:fisher_bound}, over the traditional approach of using ordinary p-values and the chi-square tail, we considered p-values from three types of support. In the first column, each p-value $P_i$ can only take one of two values, $1/2$ and $1$. We therefore have $Q_i = 0.25$ if $P_i = 1/2$ and $Q_i = 0.75$ if $P_i = 1$. Under the null hypothesis, $\Prob^{(i)}_0(P_i =1/2) = \Prob^{(i)}_0(P_i=1)=1/2$. In the second column, each p-value $P_i$ is supported on the pair $\{p_i, 1\}$, where $p_i$ is drawn uniformly on the unit interval. We therefore have $Q_i = p_i/2$ if $P_i = p_i$ and $Q_i = (1+p_i)/2$ otherwise. Under the null hypothesis, $\Prob^{(i)}_0(P_i =p_i) = 1-\Prob^{(i)}_0(P_i=1)=p_i$, for each $i$. Finally, in the third column each p-value $P_i$ takes one of ten values, $1/10, 2/10, \ldots, 1$, and therefore $Q_i = P_i - 1/20$. Under the null hypothesis, $\Prob^{(i)}_0(P_i = j/10) = 1/10$, for $j = 1, \ldots, 10$. The rows represent two different alternatives and sample sizes. In both cases, the $P_i$ are generated by left-censoring a sequence of independent and identically distributed Beta variables, $B_1, \ldots, B_n$, that is, $P_i$ is the smallest supported value larger than $B_i$. In the first scenario, the dataset is small ($n=10$), but the signal is strong (a Beta distribution with parameters 1 and 20). In the second the dataset is larger ($n=100$) but the signal is made weaker accordingly (a Beta distribution with parameters 1 and 20). Comparing just the solid and dashed lines first, we see that $Q^\dagger$ always outperforms $P^\dagger$ substantially, and sometimes overwhelmingly. In the bottom-left corner, for example, we have a situation where, at a false positive rate set to $5\%$ say, the test $Q^\dagger$ would detect the effect with probability close to one whereas with $P^\dagger$ the probability would be close to zero. 

As a final possibility, consider $R^\dagger = S_{2n}\{-2\sum_{i=1}^n \log(R_i)\}$. A disappointment is that this randomised version, the dotted line in Figure ~\ref{fig:fisher}, tends to outperform even the mid-p-values, and by a substantial margin. On the other hand, as pointed out in the introduction, the randomised p-value has some important philosophical disadvantages, and did not perform better in our real data example.

\begin{figure}
  \centering
  \includegraphics[width=12cm]{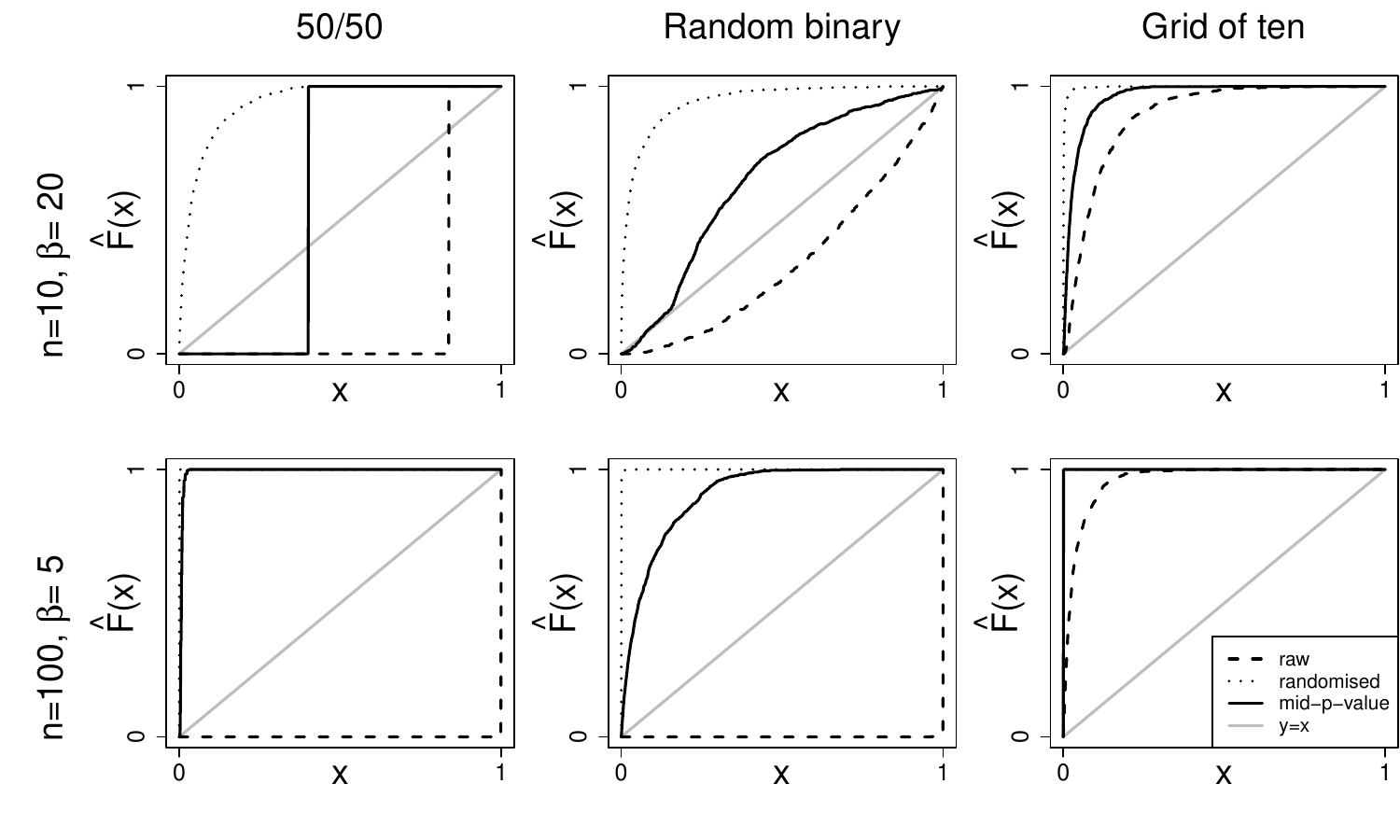}
  \caption{Fisher's method with discrete p-values. Empirical distribution functions of Fisher's combined p-value under different conditions. 50/50: each p-value is equal to 1/2 or 1 (with probability 1/2 each under $\tilde H_0$). Random binary: each p-value is equal to $p$ or 1 (with probability $p$ and $1-p$ respectively under $\tilde H_0$). $p$ is drawn uniformly on $[0,1]$ (independently of whether $\tilde H_0$ or $\tilde H_1$ holds). Grid of ten: each p-value is drawn from $1/10, 2/10 \ldots, 1$ (with probability $1/10$ each under $\tilde H_0$). $n=10, \beta=20$: 10 p-values from a left-censored Beta$(1,20)$ distribution. $n=100, \beta=5$: 100 p-values from a left-censored Beta$(1,5)$ distribution. Dotted line: randomised p-values. Solid line: mid-p-value. Dashed line: standard p-values. Further details in main text.} \label{fig:fisher}
\end{figure}

\section{Conclusion}\label{sec:conclusion}
The convex order provides a formal platform for the treatment and interpretation of mid-p-values. This article used mathematical results from this literature to combine mid-p-values, which are not conservative individually, into an overall significance level that is conservative. As shown in real data and simulations, the gains in power can be substantial.

Whereas the focus of this article was on meta-analysis, another canonical problem is multiple testing, where the task is to subselect from or adjust a set of p-values, for example, subject to a maximum false discovery rate \citep{benjamini1995controlling}. The case of discrete data has been analysed in a number of papers, including \citet{kulinskaya2009fuzzy,habiger2011randomised,liang2015false,habiger2015multiple}. A promising (but ostensibly harder) avenue of research would be to investigate the use of the convex order in this problem. 

\appendix
\section*{Appendix}
\begin{proof}[Proof of Theorem~\ref{thm:sums}]
Since $1-X$ is sub-uniform if and only if $X$ is sub-uniform, it is sufficient to prove the bounds in \eqref{eq:bound1}, \eqref{eq:bound2} and \eqref{eq:bound} hold for $\Prob\left(\bar X_n-1/2\geq t\right)$. Since $\exp(x h)$ is a convex function in $x$ for any $h$, the convex order gives us
$\E\{\exp(h X_i)\} \leq \E\{\exp(h U)\} = (e^h-1)/h$. Therefore, for any $h \geq 0$,
\begin{align*}
\Prob\left(\bar X_n -1/2 \geq t\right) &= \Prob\left[\exp\left(\sum_{i=1}^n h X_i\right) 
\geq \exp\{n h (t+1/2)\}\right],\\
&\leq \exp\{-n h (t+1/2)\}\E\left\{\exp\left(\sum_{i=1}^n h X_i\right)\right\} ,\\
& \leq \exp\{-n h (t+1/2)\}\{(e^h-1)/h\}^n\\
& = \left\{2 e^{-h t} \sinh(h/2)/h\right\}^n,
\end{align*}
where the second line follows from Markov's inequality. The choice $h = 12 t$ (motivated by an analysis of the Taylor expansion in $h$ at $0$) leads to
\begin{align*}
\Prob\left(\bar X_n -1/2 \geq t\right) &\leq \exp(-12 n t^2) \left\{\sinh(6 t)/(6t)\right\}^n\\
&\leq \exp(-6 n t^2) \left\{e^{-6 t} \sinh(6 t)/(6t)\right\}^n
\leq \exp(-6 n t^2).
\end{align*}
using the fact that $e^{-x}\sinh(x)/x = (1-e^{-2x})/(2x)$ is one at $x=0$ (using l'Hospital's rule) and decreasing. 
\end{proof}

\begin{proof}[Proof of Lemma \ref{thm:bound2}]
Again, we will prove the bound holds for $W_n= n^{-1}\sum (X_i-1/2)/\sigma_i$, so that the theorem holds by symmetry. For any $h \geq 0$,
\begin{align*}
\E\{\exp(hX_i/\sigma_i)\} & = 1+\E(h X_i/\sigma_i) + \E\left\{(hX_i/\sigma_i)^2\right\}/2 + \ldots\\
&= 1+\E(h U /\sigma_i)+ h^2\left(\frac{1}{2}+\frac{1}{8 \sigma_i^2}\right)  + \ldots \\
&\leq \E\{\exp(h U/\sigma_i)\} + h^2\left(\frac{1}{2}+\frac{1}{8 \sigma_i^2} - \frac{1}{6 \sigma_i^2}\right),
\end{align*}
because $\E\{(hX_i/\sigma_i)^n\} \leq \E\{(hU/\sigma_i)^n\}$ for $n \geq 3$, by the convex order, and $\E\{(U/\sigma_i)^2\}/2=1/(6\sigma_i^2)$.  Therefore,
\begin{align*}
\Prob(W_n \geq t) &= \Prob\left[\exp\left\{\sum_{i=1}^n h (X_i - 1/2)/\sigma_i\right\} \geq e^{h n t}\right],\\
&\leq e^{-hnt}\E\left[\exp\left\{\sum_{i=1}^n h (X_i - 1/2)/\sigma_i\right\}\right],\\
&=\prod_{i=1}^n \exp[-h\{t+1/(2 \sigma_i)\}] \left\{ \frac{e^{h/\sigma_i} - 1}{h/\sigma_i} + h^2 \left(\frac{1}{2}- \frac{1}{24\sigma_i^2}\right)\right\},
\end{align*}
proving that \eqref{eq:bestbound} holds. Next, since $\sigma^2_i \leq 1/12$, 
\begin{align*}
\Prob(W_n \geq t) &\leq \prod_{i=1}^n \exp[-h\{t+1/(2 \sigma_i)\}] \left( \frac{e^{h/\sigma_i} - 1}{h/\sigma_i}\right)\\
& =  \left(\left.2 e^{- h t} \left[\prod_{i=1}^n \sinh\{h/(2\sigma_i)\}\right]^{1/n}\right/(h/\bar{\sigma})\right)^n\\
& \leq \left\{2 e^{-h t} \sinh(h/(2 \bar \sigma))/(h/\bar \sigma)\right\}^n,
\end{align*}
using the fact that the function $\sinh$ is geometrically convex on $[0,\infty)$ \citep{niculescu2000convexity}. We proceed as in the proof of Theorem \ref{thm:sums}, choosing $h = 12\bar\sigma t$.
\end{proof}

The proofs of Theorems \ref{thm:fisher_finite} and \ref{thm:fisher_asymp} both need the following result. 
\begin{lemma}\label{lem:mean_and_var}
Let $X$ be a sub-uniform random variable. Then either i) $X$ is uniform on $[0,1]$  or ii)
\[\E\{-\log(X)\} < \E\{-\log(U)\} = 1; \quad \var\{-\log(X)\} < \var\{-\log(U)\} = 1, \]
where $U$ is a uniform random variable on $[0,1]$
\end{lemma}

\begin{proof}
\citet[Theorem 3.A.43]{shaked07} provide the following theorem. If $X \leq_{cx} Y$ and for some strictly convex function $h$ we have $\E\{h(X)\} = \E\{h(Y)\}$ then $X$ is distributed as $Y$. The function $-\log(x)$ is strictly convex, therefore either $X$ is uniform or $\E\{-\log(X)\} < \E\{-\log(U)\}$. If the latter is true, then
\begin{align*}
\var\{-\log(X)\} &= \E[-\log(X) - \E\{-\log(X)\}]^2 \\
&< \E[-\log(X) - \E\{-\log(U)\}]^2\\
&\leq \E\{\log(U) + 1\}^2\\
&=\var\{-\log(U)\}
\end{align*}
In the second line, the fact that the expected squared distance from the mean is smaller than from any other point is used, and in the fourth we used the fact that $(\log(x)+1)^2$ is convex.
\end{proof}

\begin{proof}[Proof of Theorem \ref{thm:fisher_finite}]
Let $G_n = -2 \sum \log(X_i)$. Since $U_i/2 \leq_{st} X_i$, for $i = 1, \ldots, n$, where $U_1, \ldots, U_n$ are independent uniform random variables on $[0,1]$. This implies $-\log(X_i) \leq_{st} -\log(U_i/2)$. Because the usual stochastic order is closed under convolution \citep[Theorem 1.A.3]{shaked07}, we have $G_n \leq_{st} -2 \sum \log(U_i) + 2 n \log 2$. The sum $-2 \sum \log(U_i)$ has a chi-square distribution with $2 n$ degrees of freedom, proving the first bound.

Lemma \ref{lem:mean_and_var} implies $\E(G_n) \leq 2n$ and $\var(G_n) \leq 4n$. Therefore, using Cantelli's inequality,
\begin{align*}
\Prob[G_n \geq x] & \leq \var(G_n)/\left[\var(G_n) + \{x-\E(G_n)\}^2\right]\\
& \leq \var(G_n)/\left[\var(G_n) + \{x-2n\}^2\right]\\
& \leq n/\left[n + \{(x-2n)/2\}^2\right],
\end{align*}
for $x \geq 2 n$. This proves the second bound. Finally, the moment generating function of $G_n$ is $\E\{\exp(t G_n)\} = \prod \E(X_i^{-2t})$ for $t \geq 0$. For $t \in [0,1/2)$ each $\E(X_i^{-2t}) \leq \E(U^{-2t}) = (1-2t)^{-1}$ since $x^{-2t}$ is a convex function in $x$ for $x \in [0,1]$. 
Using Markov's inequality,
\begin{align*}
\Prob(G_n\geq x) &= \Prob\{\exp(t G_n) \geq \exp(t x)\}\\
& \leq \exp(-t x)\E\{\exp(t G_n)\} \\
& \leq \exp(-t x - n \log(1-2t)),
\end{align*}
for $t \in [0,1/2)$. The minimum of this function is at $t = 1/2-n/x$, giving the third bound.
\end{proof}

\begin{proof}[Proof of Theorem \ref{thm:fisher_asymp}]
Let $V_i = -2\log(X_i)$, $\mu_V = \E(V_i)$, $W_i = -2 \log(U_i)$, where $U_1, \ldots, U_n$ are independent uniform random variables on $[0,1]$, and $\mu_W=\E(W_i)$. If $\mu_V = \mu_W$ then by Lemma~\ref{lem:mean_and_var} the $X_i$ are uniform on $[0,1]$ and we are done. The statement is also true if $\alpha=1$. Therefore assume $\mu_V < \mu_W$, $\alpha \in (0,1)$ and let $t \in (\mu_V, \mu_W)$. By the weak law of large numbers there exists an $N' \in \N$ such that, for $n \geq N'$,
\[\Prob\left(\sum_{i=1}^n W_i \geq n t\right) \geq \alpha,\]
so that $t_{\alpha,n} \geq nt$. Therefore, for $n \geq N'$,
\begin{align*}
\Prob\left(\sum_{i=1}^n V_i \geq t_{\alpha,n}\right) \leq \Prob\left(\sum_{i=1}^n V_i \geq n t\right).
\end{align*}
Again by the law of large numbers, the right-hand side tends to zero. Hence there exists an $N \geq N'$ such that it is bounded by $\alpha$ for $n \geq N$.
\end{proof}

\bibliographystyle{apalike} 
\bibliography{discrete_pvalues_jasa}

\end{document}